\documentclass[11pt,twoside]{amsart}
\usepackage[utf8]{inputenc}
\usepackage[english]{babel}
\usepackage[T1]{fontenc}
\usepackage{enumitem}
\usepackage{amsmath,amsfonts,amssymb,mathtools}
\usepackage{xcolor}
\usepackage[hidelinks]{hyperref}
\usepackage{cleveref}
\definecolor{revision}{RGB}{20,65,175}

\newcommand{\bC}{\mathbb C}
\newcommand{\bR}{\mathbb R}
\newcommand{\bQ}{\mathbb Q}
\newcommand{\bZ}{\mathbb Z}
\newcommand{\bD}{\mathbb D}
\newcommand{\bT}{\mathbb T}
\newcommand{\cD}{\mathcal D}

\newcommand{\cM}{\mathcal M}

\newtheorem{theorem}{Theorem}[section]
\newtheorem{corollary}[theorem]{Corollary}
\newtheorem{lemma}[theorem]{Lemma}
\newtheorem{proposition}[theorem]{Proposition}
\theoremstyle{definition}
\newtheorem{definition}[theorem]{Definition}
\newtheorem{remark}[theorem]{Remark}

\title{Hyperk\"{a}hler SYZ conjecture}
\author[Engel]{Philip Engel}
\address{Department of Mathematics, Statistics, 
and Computer Science, University of Illinois in Chicago (UIC),
851 S Morgan St, Chicago, IL 60607, USA}
\email{pengel@uic.edu}
\author[Mauri]{Mirko Mauri}
\address{Institut de Math\'ematiques 
de Jussieu-Paris Rive Gauche,
Universit\'e Paris Cit\'e,
Place Aur\'elie Nemours, 75013 Paris, France}
\email{mauri@imj-prg.fr}
\date{\today}

\begin{document}
\begin{abstract}
We prove the hyperk\"ahler SYZ conjecture:~the sections of 
some power of a non-trivial nef isotropic line bundle define a Lagrangian fibration. 
\end{abstract}
\maketitle

\section{Introduction}

The goal of this note is to prove the following theorem:

\begin{theorem}[Generalized abundance 
for compact hyperk\"{a}hler manifolds]\label{gen-abund}
Let $X$ be a compact irreducible hyperk\"{a}hler manifold, 
and let $L$ be a nef line bundle on $X$.
Then $L$ is semiample.
\end{theorem}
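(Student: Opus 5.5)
We split according to the Beauville--Bogomolov--Fujiki square $q(L)$; nefness gives $q(L)\ge 0$, and the hard case is $q(L)=0$, $L\neq 0$.

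\emph{Positive square.} If $q(L)>0$, then $L$ is nef and big, because $c_1(L)^{2n}$ is a positive multiple of $q(L)^n$ by the Fujiki relation. Since $K_X\cong\mathcal O_X$, the divisor $L-K_X=L$ is nef and big. The Kawamata--Shokurov basepoint-free theorem then shows that $L$ is semiample. If $L$ is trivial in the Picard group, there is nothing to prove.

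\emph{Isotropic case.} Assume $q(L)=0$ and $L$ is nontrivial. The plan is to reduce to a deformation where the statement is known and then transport semiampleness back.

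First, consider the deformations of the pair $(X,L)$. These form a hypersurface $L^\perp$ in the period domain, consisting of periods $\sigma$ with $q(\sigma,c_1(L))=0$. On this locus I would like to find a member $(X_0,L_0)$ with $L_0$ nef on which $L_0$ is already known to be semiample. The candidate is a member carrying a Lagrangian fibration with $L_0$ pulled back from the base. One way to produce it is to use known cases, namely deformation types with a Lagrangian fibration whose fiber class is $L$ up to monodromy. Another is to degenerate to a type III or cusp-type degeneration and use the construction of a fibration there.

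The tool for the transport is Matsushita's theorem. Once one member of the family $\{(X_t,L_t)\}$ is a Lagrangian fibration with $L_t$ its pullback, there is a dense subset of $L^\perp$ where $L_t$ is semiample. This should extend to every $(X_t,L_t)$ with $L_t$ nef, by upper semicontinuity of $h^0$ together with the fact that a nef isotropic class with a nonzero section is abundant (Matsushita, Verbitsky). To pass to our particular $X$, I would use the monodromy and wall-and-chamber structure of the birational Kähler cone (Markman). This says that $(X,L)$ is birational to a pair $(X',L')$ in the same family with $L'$ nef, that semiampleness of the nef line bundle survives a flop between hyperkähler manifolds because sections are preserved and the stable base locus is controlled, and then Kawamata's abundance criterion for nef divisors of numerical dimension $\nu=n=\kappa$ applies.

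The main obstacle is the first step of the isotropic case: producing, for an \emph{arbitrary} deformation type and an arbitrary primitive isotropic class, one deformation on which $L$ is known to be semiample. Without classification of deformation types this cannot be checked case by case.

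Instead, I would first try a degeneration argument. Take a one-parameter degeneration of $X$ along $L^\perp$ toward a point where the transcendental lattice becomes maximally degenerate relative to $L$, so that the monodromy logarithm $N$ has $\operatorname{im}N$ spanned by $c_1(L)$. The limiting mixed Hodge structure and the Kulikov-type model should then give a fibration, either by a Lagrangian torus fibration argument in the spirit of SYZ or by using Gross--Siebert/Kontsevich--Soibelman-type affine structures. From this we would deduce $h^0(X_t,L_t^k)>0$ for some $k$ near the limit. That nonvanishing, together with Matsushita's result that a nef isotropic bundle with sections of the right dimension defines a Lagrangian fibration, closes the argument.
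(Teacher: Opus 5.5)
Your positive-square case matches the paper. You should add that $q(L)>0$ forces $X$ to be projective (Huybrechts' criterion), so that the basepoint-free theorem applies. Your transport step is in substance what the paper imports: $\cM_\lambda$ is connected, and semiampleness on one member propagates to all by \cite[Thm.~3.8]{SV2025}.

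The genuine gap is the step you flag yourself: producing one member of $\cM_\lambda$ on which $L$ is semiample. Your degeneration sketch cannot fill it as set up.
\begin{itemize}
\item Since $N$ is $q$-skew, $\operatorname{Im}N$ has even rank. What you presumably mean is $W_0H^2_{\mathrm{lim}}=\operatorname{Im}N^2=\bQ\lambda$ with $\lambda=c_1(L)$.
\item $N^2$ is $q$-symmetric of rank one, so $N^2y=c\,q(\lambda,y)\lambda$ with $c\neq0$. Maximality ($N^2F^2_{\mathrm{lim}}\neq0$) then gives $q(\lambda,F^2_{\mathrm{lim}})\neq0$. Since $N\lambda=0$, this persists as $q(\lambda,\sigma_t)\neq0$ on nearby fibres.
\item Hence a maximal degeneration with $c_1(L)$ as its weight-zero class never stays inside $L^\perp$. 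On $X_t$ the class $\lambda$ is not of type $(1,1)$, so there is no $L_t$ whose sections you could produce.
\item The torus fibration that does exist near the limit (metric SYZ) is special Lagrangian, not holomorphic, for that complex structure. Affine-structure arguments give no holomorphic information either.
\end{itemize}
So the deduction ``$h^0(X_t,L_t^k)>0$ near the limit'' has nothing behind it.

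The missing idea is to exploit exactly this mismatch by a hyperk\"ahler rotation. The paper's route is as follows.
\begin{itemize}
\item Following \cite{Sold2019}, build a projective maximal degeneration with $W_0H^2_{\mathrm{lim}}=\bQ\lambda$.
\item The metric SYZ theorem \cite{li2026, blumli2026} gives a special Lagrangian torus $F\subset X_t$ isotopic to a profound torus.
\item Clemens' and Harder's description of vanishing cycles, together with $W_0H^{2n}_{\mathrm{lim}}=\operatorname{Sym}^nW_0H^2_{\mathrm{lim}}$ (via \cite{Ver96}), gives $\mathrm{PD}[F]=c\lambda^n$ with $c\neq0$.
\item The Fujiki relation then gives $\int_F\alpha^n=c'q(\lambda,\alpha)^n$ for all $\alpha$. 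Rotating the triple by the phase of $q(\lambda,\sigma_J)$ yields $\int_F\sigma_{J'}^n=i^n\int_F\omega_{I'}^n$.
\item Equality in Wirtinger's inequality then forces $F$ to be $I'$-holomorphic Lagrangian, with $\lambda\in H^{1,1}(X_t,I')$ and $q(\lambda,\omega_{I'})>0$.
\item After a small deformation to a non-projective member with $\mathrm{Pic}_{\bQ}=\bQ L$, the bundle $L$ is nef. Then \cite[Thm.~4.1]{GLR2013} makes $F$ a fibre of a Lagrangian fibration, so $L$ is semiample there.
\end{itemize}
Only at this point does your transport step apply.
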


The second cohomology group $H^2(X,\bZ)$ 
admits a symmetric bilinear form $q$, the
Beauville--Bogomolov--Fujiki (BBF) form,
and Theorem \ref{gen-abund} is known when
$q(L)>0$, by the base point
free theorem. 
Thus, the question is reduced
to the case where $L$ is non-trivial
with $q(L)=0$. In this case, 
Theorem \ref{gen-abund}
is equivalent to the assertion that the linear
series of some
power of $L$
defines a Lagrangian fibration $X\to B$, called 
the hyperk\"{a}hler SYZ conjecture.

Combining \Cref{gen-abund} with \cite[Cor.~1.3]{EFGMS25}, we obtain the following finiteness result.

\begin{corollary}\label{cor:finiteness}
The number of
deformation classes of compact hyperk\"{a}hler manifolds, of fixed dimension, 
with $b_2\geq 5$, is finite.
\end{corollary}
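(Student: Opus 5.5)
The corollary should follow by feeding \Cref{gen-abund} into the finiteness statement \cite[Cor.~1.3]{EFGMS25}. As I read that result, it proves finiteness of deformation classes of compact hyperk\"ahler manifolds of fixed dimension with $b_2\geq 5$, \emph{conditionally} on the hyperk\"ahler SYZ conjecture (or at least on the existence of a Lagrangian fibration on some member of each deformation class). So the plan is to check that \Cref{gen-abund} supplies exactly the hypothesis needed there, and then to quote the result.

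\emph{Step 1: an isotropic class exists.} Fix a deformation class, and a representative $X$ with $b_2(X)\geq 5$. The BBF lattice $(H^2(X,\bZ),q)$ is non-degenerate of signature $(3,b_2-3)$, so it is indefinite of rank at least $5$. By Meyer's theorem it contains a non-zero primitive class $\lambda$ with $q(\lambda)=0$.

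\emph{Step 2: make $\lambda$ a nef line bundle.} Using surjectivity of the period map and Verbitsky's global Torelli theorem, I would deform $X$ within its deformation class to a manifold $X'$ whose Picard lattice is $\bZ\lambda$. On such an $X'$ the only integral $(1,1)$-classes are multiples of $\lambda$, so there are no MBM (wall) classes in $\mathrm{Pic}(X')$. Hence the Kähler cone coincides with the positive cone, and $\pm\lambda$, which lies on the boundary of the positive cone, is nef. After replacing $\lambda$ by $-\lambda$ if necessary, this gives a non-trivial nef line bundle $L$ with $q(L)=0$.

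\emph{Step 3: apply \Cref{gen-abund}.} By \Cref{gen-abund}, $L$ is semiample. As recalled after the theorem, for $q(L)=0$ and $L$ non-trivial this means that some power $L^{\otimes k}$ defines a Lagrangian fibration $X'\to B$. Thus every deformation class with $b_2\geq 5$ contains a Lagrangian fibered member. This is precisely the input required by \cite[Cor.~1.3]{EFGMS25}, and the corollary follows.

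The main point to check is Step 2: that an isotropic class on a very general member of the Noether--Lefschetz locus is genuinely nef, not merely in the closure of the positive cone. This follows from the description of the Kähler cone via MBM classes (Amerik--Verbitsky), since a Picard lattice $\bZ\lambda$ with $q(\lambda)=0$ contains no negative classes and hence no MBM classes. If \cite{EFGMS25} is instead phrased directly in terms of the SYZ conjecture, Steps 1 and 2 are already contained there, and the proof reduces to citing \Cref{gen-abund}.
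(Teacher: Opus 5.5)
Your proposal is correct and follows the paper's argument: Meyer's theorem gives an integral isotropic class, and a very general deformation with Picard lattice generated by it makes the class nef up to sign, since there are no negative-square (MBM) classes. \Cref{gen-abund} then gives a Lagrangian-fibered member in every deformation class, and the finiteness result of \cite{EFGMS25} (the paper quotes \cite[Thm.~B]{EFGMS25} at this step) finishes the proof.
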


\begin{proof}
For $b_2\geq5$, Meyer's theorem gives 
a nonzero integral isotropic
class. A very general deformation keeping 
it of Hodge type $(1,1)$ has Picard
rank one. Thus, up to sign, it is nef, as 
there are no classes of negative
square in the Picard group. By \Cref{gen-abund}, 
this deformation admits a Lagrangian fibration. 
Then \cite[Thm.~B]{EFGMS25} gives finitely 
many deformation classes of
such manifolds in fixed dimension, 
proving the corollary.
\end{proof}

The strategy of the proof appears essentially in \cite[\S 3.5-\S 3.6]{HM22}.
The key new ingredient is the metric SYZ conjecture, recently proved by Li \cite{li2026} and Blum--Liu \cite{blumli2026}. This result produces special Lagrangian tori in the large complex structure limit. A hyperk\"{a}hler rotation then transforms such a special Lagrangian torus into a holomorphic torus. Up to deformation, the holomorphic torus is a fiber of a Lagrangian fibration, 
 and the fibration can be deformed  back to produce the required semiampleness. 

It is not clear, a priori, that the hyperk\"{a}hler rotation of a special Lagrangian torus is holomorphic in some complex structure in dimensions greater than two. To establish this, we use the fact that the torus arises as the vanishing cycle of a maximal degeneration of compact hyperk\"{a}hler manifolds, or actually a cohomological incarnation of this geometry:~its Poincar\'{e} dual is a power of an isotropic vector; see \S\ref{sec:sLag}.

\subsection{AI disclosure}

A largely complete proof was human written 
and in the form of a rough draft. After the 
October 6, 2026 posting of manuscripts by OpenAI,
the authors used ChatGPT Astra to proofread 
the existing draft. The structure of the 
proofs appears similar, but we have not yet 
had time to examine the OpenAI proof carefully.

\section{Special Lagrangian submanifolds}\label{sec:sLag}

A hyperk\"{a}hler manifold is a Riemannian manifold $(M,g)$ of real dimension $4n$ which is K\"{a}hler with respect to three complex structures $I$, $J$, $K$, satisfying the standard quaternion
relations $I^2=J^2=K^2=IJK=-\mathrm{Id}$.
If $(\omega_I, \,\omega_J, \,\omega_K)=(g(I \cdot , \cdot),\, g(J \cdot , \cdot), \, g(K \cdot , \cdot))$ denotes the corresponding K\"{a}hler forms, \[\sigma_{I}\coloneqq\omega_J+i\omega_K,
\qquad \sigma_{J}\coloneqq\omega_{K}+i\omega_I,\qquad \sigma_{K}\coloneqq \omega_{I}+i\omega_{J}\] are the holomorphic symplectic forms for $I$, $J$, $K$ respectively. We set
\[\Omega_{J}\coloneqq \frac{\sigma^n_J}{n!},\] 
which is a $J$-holomorphic volume form.

\begin{definition}
A real $2n$-dimensional connected submanifold $F\subset M$ is \emph{special Lagrangian} for
$(J,\omega_J,\Omega_J)$ with phase $\theta$ if $\omega_J|_F=0$ and
$\Im(e^{i\theta}\Omega_J)|_F=0$.
It is \emph{$I$-holomorphic Lagrangian} if it is an
$I$-complex submanifold and $\sigma_I|_F=0$.
\end{definition}

\begin{proposition}\label{lem:holspc}
Let $F\subset M$ be a compact special Lagrangian submanifold for
$(J,\omega_J,\Omega_J)$ with phase $\theta$. Then the following are equivalent:
\begin{enumerate}
\item $F$ is $I$-holomorphic Lagrangian;
\item $[\omega_K|_F]=0$ in $H^2(F, \bR)$;
\item $\int_F (\omega_{K}+i \omega_{I})^n = i^n \int_F \omega^n_I$.
\end{enumerate}
\end{proposition}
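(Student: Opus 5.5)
The plan is to prove the cycle (1)$\Rightarrow$(2)$\Rightarrow$(3)$\Rightarrow$(1). The first two implications are formal; the last uses a calibration comparison.

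\emph{(1)$\Rightarrow$(2).} If $F$ is $I$-holomorphic Lagrangian, then $\sigma_I|_F=(\omega_J+i\omega_K)|_F=0$. Taking the imaginary part gives $\omega_K|_F=0$ pointwise, so $[\omega_K|_F]=0$.

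\emph{(2)$\Rightarrow$(3).} The integral $\int_F(\omega_K+i\omega_I)^n$ depends only on the restricted classes $[\omega_K|_F],[\omega_I|_F]\in H^2(F,\bR)$, since $F$ is compact and closed. Expanding binomially, every term containing $[\omega_K|_F]$ vanishes. Only $i^n\int_F\omega_I^n$ survives.

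\emph{(3)$\Rightarrow$(1).} This is the main step. It rests on two pointwise calibration facts on an oriented real $2n$-plane $T\subset T_pM$.

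First, if $T$ is $\omega_J$-Lagrangian, then $|\Omega_J|_T|=\mathrm{vol}_T$. This is the standard fact that $\Omega_J$, normalized as $\sigma_J^n/n!$, has comass one with equality exactly on Lagrangian planes. I will verify the normalization in a flat model $\bC^{2n}$, where $g$, $I,J,K$ are constant and $\sigma_J^n/n!$ is the standard $J$-holomorphic volume form. This normalization check is the step I expect to need the most care.

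Second, Wirtinger's inequality gives $|\omega_I^n/n!|_T|\le\mathrm{vol}_T$, with equality iff $T$ is $I$-complex (for one of the two orientations).

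Now let $F$ be special Lagrangian of phase $\theta$. Then $e^{i\theta}\Omega_J|_F=\pm\mathrm{vol}_F$ with constant sign, since $F$ is connected. Hence $\left|\int_F\Omega_J\right|=\mathrm{vol}(F)$. Hypothesis (3) says $\int_F\Omega_J=i^n\int_F\omega_I^n/n!$. Therefore
\[\mathrm{vol}(F)=\Bigl|\int_F \tfrac{\omega_I^n}{n!}\Bigr|\le\int_F\Bigl|\tfrac{\omega_I^n}{n!}\Big|_{T_pF}\Bigr|\le \mathrm{vol}(F).\]
Equality forces equality in Wirtinger at every point, with a constant sign. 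So every tangent space $T_pF$ is $I$-complex, and $F$ is an $I$-complex submanifold of complex dimension $n$.

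It remains to show $\sigma_I|_F=0$. We already know $\omega_J|_F=0$, so we need $\omega_K|_F=0$. For $u,v\in T_pF$, the quaternion relations give
\[\omega_K(u,Iv)=g(Ku,Iv)=-g(IKu,v)=g(Ju,v)=\omega_J(u,v)=0.\]
Since $T_pF$ is $I$-stable, $Iv$ ranges over all of $T_pF$. Hence $\omega_K|_F=0$, so $\sigma_I|_F=0$. As $F$ has half the real dimension of $M$, it is $I$-holomorphic Lagrangian.
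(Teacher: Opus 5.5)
Your proposal is correct and follows the paper's proof essentially step by step. You use the same cycle $(1)\Rightarrow(2)\Rightarrow(3)\Rightarrow(1)$, and you get the last implication from the same sandwich $\mathrm{Vol}(F)=|\int_F\Omega_J|=|\int_F\omega_I^n/n!|\le\int_F|\omega_I^n/n!|\le\mathrm{Vol}(F)$ together with the equality case of Wirtinger's inequality. The only cosmetic difference is at the end: you kill $\omega_K|_F$ via $\omega_K(u,Iv)=\omega_J(u,v)$, while the paper uses that $\sigma_I|_F=i\,\omega_K|_F$ is of type $(2,0)$, which is the same quaternionic computation.
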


Before proving \Cref{lem:holspc}, we recall the classical Wirtinger's inequality.
\begin{lemma}[Wirtinger's inequality]  Let $(N, g, \omega, I)$ be a complex K\"{a}hler manifold and $F \subset N$ be an orientable real submanifold of dimension $2m$. Let $\mathrm{vol}_{F}$ denote the Riemannian volume form of the induced metric $g|_{F}$. Then 
  $$\bigg| \frac{\omega^{m}}{m!} \bigg|\le
  \mathrm{vol}_{F},$$
  and equality holds if and only if $F$ is a complex submanifold.
\end{lemma}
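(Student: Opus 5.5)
The inequality is pointwise, so I would reduce it to linear algebra on a single tangent space and then apply it at every point of $F$. Fix $p\in F$ and set $V=T_pF\subset T_pN$, a real $2m$-dimensional subspace of the Hermitian vector space $(T_pN,g_p,I_p)$. Since $\mathrm{vol}_F$ is the Riemannian volume form of $g|_F$ for the chosen orientation, the claim is as follows. Write $\frac{\omega^m}{m!}\big|_F=f\,\mathrm{vol}_F$ for a function $f$. Then $|f(p)|\le 1$, with equality exactly when $V$ is $I$-invariant. Concretely, I must show that $\bigl|\frac{\omega^m}{m!}(e_1,\dots,e_{2m})\bigr|\le 1$ for one (hence every) oriented orthonormal basis $(e_i)$ of $V$.

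\textbf{Normal form.} Define the endomorphism $A$ of $V$ by $\omega(u,v)=g(Au,v)$ for $u,v\in V$. Concretely, $A=\pi_V\circ I|_V$, where $\pi_V$ is orthogonal projection onto $V$. Then $A$ is $g$-skew-symmetric. By the spectral theorem for skew-symmetric operators, there is an orthonormal basis $e_1,\dots,e_{2m}$ of $V$ such that $\omega|_V=\sum_{j=1}^m\lambda_j\, e_{2j-1}^*\wedge e_{2j}^*$, with $\lambda_j=\omega(e_{2j-1},e_{2j})=g(Ie_{2j-1},e_{2j})$. I can flip the sign of one vector so that this basis is oriented. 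Then
\[
\frac{\omega^m}{m!}(e_1,\dots,e_{2m})=\prod_{j=1}^m\lambda_j .
\]
Since $|Ie_{2j-1}|=|e_{2j-1}|=1$ and $|e_{2j}|=1$, Cauchy--Schwarz gives $|\lambda_j|\le 1$ for each $j$. Hence $|f(p)|\le1$, which is the inequality.

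\textbf{Equality case.} Equality at $p$ forces $|\lambda_j|=1$ for all $j$. By the equality case of Cauchy--Schwarz, this means $e_{2j}=\pm Ie_{2j-1}$ for every $j$. Then $V=\mathrm{span}\{e_{2j-1},Ie_{2j-1}\}_{j}$ is $I$-invariant, that is, a complex subspace. Conversely, suppose $V$ is $I$-invariant. Choose a unitary basis $v_1,Iv_1,\dots,v_m,Iv_m$ of $V$, which is possible because $g$ is $I$-Hermitian. In this basis every $\lambda_j=1$, so $|f(p)|=1$. Applying this at every point, equality $\bigl|\frac{\omega^m}{m!}\big|_F\bigr|=\mathrm{vol}_F$ holds everywhere if and only if every tangent space of $F$ is $I$-invariant. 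A real submanifold with $I$-invariant tangent spaces is a complex submanifold: the almost complex structure $I|_F$ is inherited from the integrable structure $I$, and its Nijenhuis tensor is the restriction of that of $I$, hence vanishes. Conversely, a complex submanifold has $I$-invariant tangent spaces by definition.

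The only step needing care is the normal form. One must realize $\omega|_V$ through the skew operator $A$ and use an orthonormal basis adapted to it, rather than merely a symplectic basis, so that the same basis computes both $\omega^m/m!$ and $\mathrm{vol}_F$. Once that is in place, the bound and its equality case are both just Cauchy--Schwarz.
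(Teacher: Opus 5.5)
Your proof is correct. It is the standard canonical-form proof of Wirtinger's inequality: take an orthonormal basis of $T_pF$ adapted to the skew operator $\pi_V\circ I|_V$, bound $|\lambda_j|=|g(Ie_{2j-1},e_{2j})|\le 1$ by Cauchy--Schwarz, and get equality iff $e_{2j}=\pm Ie_{2j-1}$. This is essentially the argument in the references the paper cites (e.g.\ Harvey--Lawson); the paper gives no proof of its own beyond those citations. Your last step, from $I$-invariant tangent spaces to a complex submanifold, is fine; it implicitly uses Newlander--Nirenberg.
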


\begin{proof}
See \cite[\S 2.2]{HB1982}, \cite[Ex. 1.2.9]{H2005} or \cite[Thm 2.2]{zhu2023}.
\end{proof}

\begin{proof}[Proof of \Cref{lem:holspc}]
$(1)\Rightarrow (2)$:   $\omega_{K}|_{F} = \Im(\sigma_I|_F)=0$. $(2)\Rightarrow (3)$:  trivial. $(3)\Rightarrow (1)$: The special Lagrangianity of $F$ implies that $e^{i\theta}\Omega_{J} =\pm \mathrm{vol}_{F}$; see \cite[\S I.8.1]{GHJ2003} or \cite[Cor.~III.1.11]{HB1982}. Integrating along $F$, we obtain 
\[
\mathrm{Vol}(F)=
\bigg|\int_F
\Omega_J \bigg| = \bigg|\frac{1}{n!}\int_F
(\omega_{K} + i \omega_I)^n \bigg| = \bigg| \frac{i^n}{n!} \int_{F} \omega_I^n \bigg| \leq 
\int_{F} \bigg|\frac{\omega_I^n}{n!} \bigg|
\leq \mathrm{Vol}(F),\]
by (3) and Wirtinger's inequality. Equality throughout forces the equality case in Wirtinger's inequality, i.e., $F$ is an $I$-complex submanifold. 

 In particular, $\sigma_{I}|_{F} =(\omega_{J}+i\omega_K)|_{F}=i \omega_{K}|_{F}$ is a complex (2,0)-form. The identity $\sigma_{I}(Iv, w)=i\sigma_{I}(v,w)$, with $v,w \in TF$, gives $\omega_{K}(Iv, w)=i\omega_{K}(v,w)$, but the left side is real and the right side is imaginary, so $\sigma_{I}|_{F} = \omega_{K}|_{F}=0$. We conclude that $F$ is $I$-holomorphic Lagrangian. 
\end{proof}

Let $M$ be a \emph{compact} hyperk\"{a}hler manifold of complex dimension $2n$. For the remainder of the paper, we assume that $M$ is simply connected
and that $h^{2,0}(M,I)=1$.
Let $q$ be the Beauville--Bogomolov--Fujiki form and $c_M>0$ the Fujiki constant such that
\[
\int_M \alpha^{2n} = c_M q(\alpha)^n \quad \text{ with }\alpha\in H^2(M, \bC).
\]

\begin{lemma} \label{lem:technico}
Let $F$ be a compact special Lagrangian submanifold for
$(J,\omega_J,\Omega_J)$ such that
\[
\mathrm{PD}[F]=c\lambda^n \quad \text{ with }\lambda \in H^2(M, \bQ), \quad q(\lambda)=0, \quad c\in \bQ.
\]
Then, \[\int_F \alpha^n = c' q(\lambda, \alpha)^n\] for any $\alpha\in H^2(M, \bC)$; here $c'\coloneqq c\,c_M\,2^n\binom{2n}{n}^{-1}$. Thus
$c \neq 0$, $q(\lambda, \sigma_{J}) \neq 0$, and $q(\lambda, \omega_J)=0$.
\end{lemma}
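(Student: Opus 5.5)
The plan is to reduce everything to the Fujiki relation on $M$ by polarization. By Poincaré duality, for any closed $2$-form representing $\alpha\in H^2(M,\bC)$,
\[
\int_F \alpha^n=\int_M \mathrm{PD}[F]\wedge\alpha^n = c\int_M \lambda^n\alpha^n .
\]
We use the convention for $\mathrm{PD}$ that makes this hold without a sign, and we extend $q$ complex-bilinearly. So it suffices to compute $\int_M\lambda^n\alpha^n$.

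For this, apply the Fujiki relation to $s\lambda+t\alpha$ with $s,t$ formal (or real) parameters. Since both sides are polynomial identities in $s,t$, we get
\[
\int_M (s\lambda+t\alpha)^{2n}=c_M\,q(s\lambda+t\alpha)^n = c_M\bigl(2st\,q(\lambda,\alpha)+t^2q(\alpha)\bigr)^n,
\]
using $q(\lambda)=0$. Now compare the coefficients of $s^nt^n$.
\begin{itemize}
\item On the left, the coefficient is $\binom{2n}{n}\int_M\lambda^n\alpha^n$.
\item On the right, expanding binomially, the term $(2st\,q(\lambda,\alpha))^k(t^2q(\alpha))^{n-k}$ contributes $s^kt^{2n-k}$. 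Only $k=n$ gives $s^nt^n$, with coefficient $c_M2^nq(\lambda,\alpha)^n$.
\end{itemize}
Hence $\int_M\lambda^n\alpha^n=c_M2^n\binom{2n}{n}^{-1}q(\lambda,\alpha)^n$. Multiplying by $c$ gives $\int_F\alpha^n=c'q(\lambda,\alpha)^n$ with $c'$ as stated.

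For the consequences, we use the special Lagrangian conditions on $F$. As recalled in the proof of \Cref{lem:holspc}, $e^{i\theta}\Omega_J|_F=\pm\mathrm{vol}_F$. Therefore
\[
\int_F\sigma_J^n=n!\int_F\Omega_J=\pm n!\,e^{-i\theta}\,\mathrm{Vol}(F)\neq0,
\]
since $F$ is a nonempty compact $2n$-dimensional submanifold. Taking $\alpha=\sigma_J$ (the class $[\sigma_J]\in H^2(M,\bC)$) in the formula gives $c'q(\lambda,\sigma_J)^n\neq0$. This forces both $c\neq0$ and $q(\lambda,\sigma_J)\neq0$. Next, $\omega_J|_F=0$ gives $\int_F\omega_J^n=0$, so $c'q(\lambda,\omega_J)^n=0$. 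Since $c'\neq0$, we conclude $q(\lambda,\omega_J)=0$.

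The only delicate points are bookkeeping. One is the orientation and sign convention in $\mathrm{PD}$; this is harmless, since any sign can be absorbed into $c$ and all conclusions are nonvanishing statements. The other is checking that the polarization is legitimate. It is, because the Fujiki relation holds for all complex classes and so yields an identity of polynomials. I expect no genuine obstacle beyond this.
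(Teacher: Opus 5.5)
Your proof is correct and follows essentially the same route as the paper's: you expand the Fujiki relation for $s\lambda+\alpha$ and compare the $s^n$ coefficients (your extra homogenizing parameter $t$ changes nothing). You then deduce the consequences from $\bigl|\int_F\Omega_J\bigr|=\mathrm{Vol}(F)>0$ and $\omega_J|_F=0$, exactly as the paper does.
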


\begin{proof}
Take the Fujiki's relation $\int_M (s \lambda + \alpha)^{2n} = c_M q(s \lambda + \alpha)^n=
c_M(2s q(\lambda, \alpha) +q(\alpha))^n$ and compare the coefficients of $s^n$:
\[
{2n \choose n} \int_F \alpha^n = c\,{2n \choose n} \int_M \lambda^n \alpha^n = c\,c_M\,2^n q(\lambda, \alpha)^n.
\]
In particular, $c \neq 0$ and $q(\lambda, \sigma_{J}) \neq 0$ since the special Lagrangian property entails \[|\mathrm{PD}[F] \cdot \Omega_{J}| = \bigg|\int_F \Omega_{J}\bigg|=\bigg|\int_F \frac{\sigma^n_{J}}{n!}\bigg|=|c'| \frac{|q(\lambda, \sigma_J)|^n}{n!}=\mathrm{Vol}(F)> 0.\]
Similarly, we have $0=\int_F \omega_J^n = c' q(\lambda, \omega_J)^n$.
\end{proof}

\begin{proposition}\label{prop:PDholLag}
Let $(M, g, I, J, K)$ be a compact irreducible hyperk\"{a}hler manifold of complex dimension $2n$. Let $F$ be a compact special Lagrangian submanifold for
$(J,\omega_J,\Omega_J)$ with phase $\theta$ such that
\[
\mathrm{PD}[F]=c\lambda^n \quad \text{ with }\lambda \in H^2(M, \bQ), \quad q(\lambda)=0, \quad c\in \bQ.
\]
Then $F$ is $I'$-holomorphic Lagrangian with $I'\coloneqq \sin(\varphi)I + \cos(\varphi)K$ and $\varphi \coloneqq \arg q(\lambda, \sigma_{J})$. Moreover, $\lambda \in H^{1,1}(M, I')$  and $q(\lambda,\omega_{I'})>0$.
\end{proposition}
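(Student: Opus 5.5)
The plan is to rotate the hyperkähler frame inside the $\langle I,K\rangle$-plane, keeping $J$ fixed. This makes $F$ special Lagrangian for the same $(J,\omega_J)$ up to a change of phase, and puts it in a position to apply \Cref{lem:holspc} to the rotated triple. Concretely, write $q(\lambda,\sigma_J)=re^{i\varphi}$ with $r>0$; this is possible since $q(\lambda,\sigma_J)\neq 0$ by \Cref{lem:technico}. Since $\sigma_J=\omega_K+i\omega_I$ and $\lambda$ is real, we get
\[
q(\lambda,\omega_K)=r\cos\varphi,\qquad q(\lambda,\omega_I)=r\sin\varphi .
\]
Set
\[
I'\coloneqq \sin\varphi\, I+\cos\varphi\, K,\qquad J'\coloneqq J,\qquad K'\coloneqq I'J=\sin\varphi\, K-\cos\varphi\, I .
\]
This is again a quaternionic triple of parallel complex structures for the same metric $g$, so $(M,g,I',J,K')$ is hyperkähler. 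Its Kähler forms are the corresponding linear combinations of $\omega_I$ and $\omega_K$, and $\omega_J$ is unchanged.

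First I will do the phase bookkeeping. The form $\sigma'_J\coloneqq\omega_{K'}+i\omega_{I'}$ is a unit complex multiple $e^{i\psi}\sigma_J$ of $\sigma_J$, since it is obtained by rotating the pair $(\omega_K,\omega_I)$. Hence $\Omega'_J=e^{in\psi}\Omega_J$, and $F$ is special Lagrangian for $(J,\omega_J,\Omega'_J)$ with phase $\theta-n\psi$. Thus \Cref{lem:holspc} applies to the new triple. Next, by linearity,
\[
q(\lambda,\omega_{I'})=r(\sin^2\varphi+\cos^2\varphi)=r>0,\qquad q(\lambda,\omega_{K'})=r(\sin\varphi\cos\varphi-\cos\varphi\sin\varphi)=0 .
\]
This already gives the positivity claim $q(\lambda,\omega_{I'})>0$.

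To get holomorphicity, I will verify condition (3) of \Cref{lem:holspc} for the rotated frame using \Cref{lem:technico}. On one side,
\[
\int_F(\omega_{K'}+i\omega_{I'})^n=c'\bigl(q(\lambda,\omega_{K'})+iq(\lambda,\omega_{I'})\bigr)^n=c'(ir)^n .
\]
On the other side,
\[
i^n\int_F\omega_{I'}^n=i^nc'\,q(\lambda,\omega_{I'})^n=i^nc'r^n .
\]
The two agree, so $F$ is $I'$-holomorphic Lagrangian. Using condition (3) rather than (2) is the point of the argument: \Cref{lem:technico} only controls integrals over $F$, not the restriction of classes to $H^2(F)$.

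Finally, for $\lambda\in H^{1,1}(M,I')$, I use that $h^{2,0}(M,I')=1$ by irreducibility, with $H^{2,0}(M,I')$ spanned by $\sigma_{I'}=\omega_J+i\omega_{K'}$. \Cref{lem:technico} gives $q(\lambda,\omega_J)=0$, and we showed $q(\lambda,\omega_{K'})=0$, so $q(\lambda,\sigma_{I'})=0=q(\lambda,\bar\sigma_{I'})$. Since the BBF form makes $H^{2,0}\oplus H^{0,2}$ $q$-orthogonal to $H^{1,1}$ and is nondegenerate, the real class $\lambda$ lies in $H^{1,1}(M,I')$.

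I expect the main obstacle to be the sign and orientation bookkeeping. One must check that $I'J=K'$ gives the stated $K'$, that $\omega_{K'}+i\omega_{I'}$ really is a unit multiple of $\sigma_J$ (so special Lagrangianity persists), and that the sign conventions in $\sigma_J=\omega_K+i\omega_I$ produce exactly the factor $i^n$ in condition (3) rather than $(-i)^n$. If they produced $(-i)^n$, I would replace $\varphi$ by $\varphi+\pi$, which changes the sign of $I'$.
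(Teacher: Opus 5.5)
Your proof is correct and is essentially the paper's argument: rotate in the $\langle I,K\rangle$-plane so that $\omega_{K'}+i\omega_{I'}=e^{i\psi}\sigma_J$ with $\psi=\pi/2-\varphi$ (the change of basis has determinant $+1$, so your sign worry does not arise), check $q(\lambda,\omega_{K'})=0$, verify condition (3) of \Cref{lem:holspc} via \Cref{lem:technico}, and deduce $\lambda\in H^{1,1}(M,I')$ from $q(\lambda,\omega_J)=q(\lambda,\omega_{K'})=0$. Your trigonometric computation of $q(\lambda,\omega_{I'})=r$ and $q(\lambda,\omega_{K'})=0$ is just an expanded form of the paper's identity $q(\lambda,\omega_{K'})+iq(\lambda,\omega_{I'})=e^{i\psi}q(\lambda,\sigma_J)=i|q(\lambda,\sigma_J)|$.
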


\begin{proof}
The triple $(I', J'\coloneqq J, K'\coloneqq I'J)$ is a hyperk\"{a}hler triple for $g$ with $\sigma_{J'}\coloneqq \omega_{K'}+i\omega_{I'}=e^{i \psi}\sigma_{J}$ with $\psi \coloneqq \pi/2-\varphi$. Note that $q(\lambda, \omega_{K'})=0$, because
$q(\lambda, \omega_{K'})+iq(\lambda, \omega_{I'})=q(\lambda, e^{i \psi}\sigma_{J})=e^{i \psi}e^{i\varphi}|q(\lambda, \sigma_{J})|=i|q(\lambda, \sigma_{J})|$. Since $\Im(e^{i\theta}\sigma^n_J) = \Im(e^{i(\theta -n\psi)}\sigma^n_{J'})$, $F$ is a special Lagrangian submanifold for
$(J',\omega_{J'},\sigma^n_{J'}/n!)$ with phase $\theta -n\psi$. By \Cref{lem:technico} and $q(\lambda, \omega_{K'})=0$, we have
\begin{equation}\label{eq:identity}
\int_F (\omega_{K'}+i\omega_{I'})^n
=
c'q(\lambda, \omega_{K'}+i\omega_{I'})^n
=
c'q(\lambda, i\omega_{I'})^n
=
i^n\int_F \omega_{I'}^n.
\end{equation}
By \Cref{lem:holspc}, \eqref{eq:identity} implies that $F$ is $I'$-holomorphic Lagrangian. Finally, $\sigma_{I'} = \omega_{J} + i\omega_{K'}$
and
$q(\lambda,\omega_J)=q(\lambda,\omega_{K'})=0$.
Thus,
$q(\lambda,\sigma_{I'})=0$,
i.e.,
$\lambda\in H^{1,1}(M,I')$. The same calculation gives $q(\lambda,\omega_{I'})=|q(\lambda,\sigma_J)|>0$.
\end{proof}

\section{Metric SYZ conjecture}\label{metric-syz}

Let $\pi\colon (X, H) \to \bD^*$ be a degeneration of Calabi--Yau manifolds over the punctured disk with coordinate $t$, polarized by a relatively ample line bundle $H$. Assume that $\pi$ is an analytic restriction
of a projective family over a punctured algebraic curve.
By semistable reduction, after a finite base change, we may assume that $X$ also admits a semistable filling
over the disk $\bD$.
By \cite[Thm.~1.1]{Fujino2011}, a minimal model program for the semistable filling ends with a minimal dlt model $(\bar{X}, X_{0}) \to \bD$, i.e., $K_{\bar{X}}\sim 0$ and $(\bar{X}, X_{0})$ is dlt.

The degeneration $\pi$ is maximal if the dual complex $\cD(X_0)$ has maximal dimension $\dim X_t\eqcolon d$. In particular, $X_0$ then has a stratum of dimension $0$. In a neighbourhood of this stratum, $(\bar{X}, X_{0})$ is analytically locally isomorphic to $(\bC^{d+1}, \,x_{0}\ldots x_{d}=0)$, and $X_{t}$ is cut by the equation $x_{0}\ldots x_{d}=t$. A profound torus is a real torus $\bT^{d} \subset X_{t}$, isotopic to 
\[
\big\{(r_0e^{2\pi i\theta_0},\ldots,r_de^{2\pi i\theta_d}): 
\theta_j\in\bR/\bZ, \,\, \textstyle \prod^d_{j=0}r_je^{2\pi i\theta_j}=t
\big\}
\]
 for fixed small real numbers $r_j>0$ satisfying $\prod_{j=0}^d r_j = |t|$.
\begin{remark}\label{rmk:profound} The ambient isotopy type of $\bT^{d}$ does not depend either on the choice of the
coordinates $(x_0, \ldots, x_d)$ or on the choice of the minimal stratum of $X_0$; see \cite[Rmk 3.7]{HM22}. Profound tori can be defined also in a neighborhood of a stratum of dimension $0$ of a semistable model, but in general its ambient isotopy class depends on the stratum, and it is independent if it is a minimal model. By \cite[Lem.~16]{dFKX17}, a neighbourhood of a stratum of dimension $0$ in $X_0$ is isomorphic to a neighbourhood of a stratum of dimension $0$ in the semistable model, so every profound torus in the dlt model is isotopic to a profound torus in the semistable model.
\end{remark}

Assume that there is a nowhere vanishing holomorphic volume form $\Omega \in H^0(X, K_X)$  inducing the fiberwise holomorphic volume forms $ \Omega_t \coloneqq \iota_{d/dt}\Omega\vert_{X_t}$. We consider the Calabi--Yau metric $\omega_{\mathrm{CY},t}$ on $X_t$ in the ample class
$\frac{1}{|\!\log |t||}\,c_1(H)$,
defined by the complex Monge--Ampère equation
\[
\omega_{\mathrm{CY},t}^d
=
\frac{(H^d)}{|\!\log |t||^{d}}\mu_t, \quad \text{ with }\mu_t
=
\frac{\Omega_t\wedge\overline{\Omega}_t}
{\int_{X_t}\Omega_t\wedge\overline{\Omega}_t}.
\]

\begin{theorem}\label{thm:metricSYZ} \cite{li2026, blumli2026}
Let $\pi\colon (X, H) \to \bD^*$ be a polarized maximal degeneration of Calabi--Yau manifolds.  Given any
$0<\delta\ll 1$,
for sufficiently small \(t\), depending on \(\delta\), there exists a special Lagrangian  torus fibration with respect to the Calabi--Yau structure $
\left(X_t,\,\omega_{\mathrm{CY},t},\,\Omega_t\right),$ on an open subset  of \(X_t\) whose normalised Calabi--Yau measure is at least
$1-\delta$. The fiber of the fibration is isotopic to a profound torus, and the base of the fibration is an open subset of $\cD(X_{0})$. 
\end{theorem}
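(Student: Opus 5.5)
The statement is quoted from \cite{li2026, blumli2026}, so the plan is to assemble it from those works. I will only check that their hypotheses match the present setting (polarized, algebraic, maximal degeneration with a minimal dlt model) and that their conclusion implies the two extra claims here: the fibers are profound tori, and the base is an open subset of $\cD(X_0)$. I have not reconstructed the details of either paper, so the attributions of specific ingredients below are my reading of the strategy and should be checked against the sources.

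\textbf{Step 1 (measure concentration).} Identify $\cD(X_0)$ with the essential skeleton $\mathrm{Sk}(X)$ of the Berkovich analytification of $X$ over $\bC((t))$. By the Boucksom--Jonsson convergence theorem, the normalized measures $\mu_t$ converge in the hybrid topology to a Lebesgue-type measure on the top-dimensional faces of $\mathrm{Sk}(X)$. Hence, for small $t$, all but $\delta$ of the mass of $\mu_t$ lies over compact subsets of open $d$-simplices. Over the interior of a top cell, the chart $x_0\cdots x_d=t$ gives log coordinates $\log|x_j|/|\!\log|t||$, and $X_t$ is a $\bT^d$-torus bundle whose fibers are exactly the profound tori.

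\textbf{Step 2 (reduction to a real Monge--Amp\`ere problem).} Write $\omega_{\mathrm{CY},t}$ as a potential relative to a fixed reference metric. Following Li's approach, the rescaled potentials converge in the hybrid topology to the solution $\phi$ of the non-archimedean Monge--Amp\`ere equation, namely Boucksom--Favre--Jonsson's solution in the class $c_1(H)$. Restricted to $\mathrm{Sk}(X)$, this $\phi$ is a convex solution of a real Monge--Amp\`ere equation $\det D^2\phi=\mathrm{const}$ in the affine structure of each top cell.

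Li's theorem then says the following: if $\phi$ is smooth and strictly convex on an open subset $U$ of the skeleton carrying measure at least $1-\delta/2$, then over $U$ the metric $\omega_{\mathrm{CY},t}$ is $C^\infty$-close to the semi-flat metric built from $\phi$. A perturbation argument (an implicit function theorem for the special Lagrangian condition on torus fibrations) then deforms the semi-flat torus fibration into a genuine special Lagrangian fibration with respect to $(\omega_{\mathrm{CY},t},\Omega_t)$, for $t$ small depending on $\delta$.

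\textbf{Step 3 (main obstacle: regularity).} The hard point is the regularity hypothesis on $\phi$, the comparison property between the non-archimedean solution and a smooth real Monge--Amp\`ere solution on a large open set. In Li's original work this was conditional; I attribute the unconditional input to Blum--Liu \cite{blumli2026}, who I believe establish that the non-archimedean solution is smooth and strictly convex off a closed subset of measure zero in the skeleton. Given that, choosing $U$ to be the complement of a small neighbourhood of the singular set produces the required open set of measure at least $1-\delta$.

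\textbf{Step 4 (identifying fibers and base).} The perturbation in Step 2 is $C^0$-small and stays within the torus bundle over $U$. So each special Lagrangian fiber is isotopic to a $\bT^d$-orbit in the local log coordinates, i.e. to a profound torus. By \Cref{rmk:profound}, this isotopy class is independent of the chosen cell, coordinates and model. The base is $U$, which is an open subset of $\mathrm{Sk}(X)\cong\cD(X_0)$.
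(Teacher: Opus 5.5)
Your proposal matches the paper's route: the paper takes the existence of the special Lagrangian torus fibration on a region of normalised measure at least $1-\delta$ directly from \cite{li2026, blumli2026} (it cites \cite[Conjecture 1.1]{li2026}). The only thing it argues directly is your Step 4, that near a $0$-dimensional stratum the fibration is a small perturbation of the map $(x_1,\ldots,x_d)\mapsto \frac{1}{\log|t|}(\log|x_1|,\ldots,\log|x_d|)$, so the fibers are isotopic to profound tori. Your Steps 1--3 are a sketch of the cited work that the paper does not need, and in Step 4 the isotopy really comes from the $C^1$ (graphical) closeness produced by the perturbation argument, not from $C^0$-closeness alone.
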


\begin{proof} The first part of the statement follows from \cite[Conjecture 1.1, p.~4]{li2026}. Near a minimal stratum, the fibration is a small
perturbation of the torus fibration
\[(x_1, \ldots, x_d) \mapsto \frac{1}{\log|t|}
(\log|x_1|, \ldots, \log|x_d|),\]
so the fibers are isotopic to profound tori.
\end{proof}

\begin{lemma}\label{lem:weight}
Let $\pi\colon X\to\bD^*$ be a maximal 
degeneration of compact
hyperk\"{a}hler manifolds of complex dimension $2n$, with
limit mixed Hodge
structure $(H^*_{\rm lim}, W_{\bullet}H^*_{\rm lim})$ on $H^*(X_t,\bQ)$.
There exists a nonzero class $\lambda \in H^2(X_t,\bQ)$, isotropic for the BBF form $q$, such that 
$$
W_0H^2_{\mathrm{lim}}=\bQ \lambda,\qquad
W_0H^{2n}_{\mathrm{lim}}=\bQ\lambda^n.
$$ 
The Poincar\'e dual of every profound torus lies in  $W_0H^{2n}_{\mathrm{lim}}= \bQ\lambda^n$. If it is isotopic to a special Lagrangian torus,
this class is nonzero.
\end{lemma}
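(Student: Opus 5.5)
The plan is to work with the monodromy logarithm $N$ on $H^*(X_t,\bQ)$ and the limit mixed Hodge structure it induces.

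\textbf{Step 1: $H^2$.} Maximality of the degeneration means the dual complex has dimension $2n$. By the standard comparison between the weight filtration and the dual complex (as used in \cite{HM22}), this gives $N^{2n}\neq 0$ on $H^{2n}$. For hyperk\"ahler manifolds the Looijenga--Lunts--Verbitsky decomposition shows that the nilpotency index on $H^2$ controls that on all of $H^*$. Hence $N$ on $H^2$ has $N^2\ne0$: the degeneration is of type III in the sense of Kollár--Laza--Saccà--Voisin. Since $N$ is $q$-antisymmetric and $N^3=0$ on $H^2$, the weight filtration is $W_0=\operatorname{Im}N^2$, $W_2=\ker N^2$, $W_4=H^2$. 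Here $\operatorname{Im}N^2$ is one-dimensional and spanned by some $\lambda$. Isotropy follows from antisymmetry: $q(N^2x,N^2y)=q(x,N^4y)=0$.

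\textbf{Step 2: $H^{2n}$.} $N$ acts on $H^*$ as a derivation, namely $N=e_\mu$ composed with LLV operators; concretely $N$ lies in $\mathfrak{so}(H^2,q)\subset\mathfrak g_{\rm LLV}$. The weight filtration on $H^{2n}$ is computed from the $\mathfrak{sl}_2$-weights of $N$ acting on the LLV module. In Verbitsky's component $SH^*\cong \mathrm{Sym}$-type representation, the lowest weight subspace in degree $2n$ is spanned by $\lambda^n$, since $\lambda$ is the weight $-2$ vector. To show the full $W_0H^{2n}$ is still $\bQ\lambda^n$, note that $W_0H^{2n}=\operatorname{Im}N^{2n}$ (the middle degree has weight centered at $2n$). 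The extremal $N$-weight $-2n$ on all of $H^*$ can only occur in the Verbitsky component, because other irreducible LLV summands have smaller highest weight for the $\mathfrak{so}$ part. I expect this to be the main obstacle. I would handle it via the known result, for type III degenerations, that $\dim\operatorname{Gr}^W_0H^{2n}=1$, which is equivalent to the dual complex having the rational homology of $\mathbb{CP}^n$ in top degree. Alternatively, I would cite \cite[\S3]{HM22}, where $W_0H^{2n}_{\lim}=\bQ\lambda^n$ is established.

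\textbf{Step 3: profound tori.} A profound torus $T$ is the vanishing torus near a $0$-dimensional stratum. By Remark~\ref{rmk:profound} we may work in the semistable model. Its class is monodromy invariant, because $T$ lives near the zero-stratum and can be chosen disjoint from the support of the monodromy twist, so $N[T]=0$ in homology. Dually, via Clemens--Schmid/the specialization map, $\mathrm{PD}[T]$ lies in the image of the vanishing/Gysin map from the deepest stratum, which lands in $W_0$. More precisely, the local invariant cycle picture identifies classes supported near $0$-strata of the central fibre with $\operatorname{Gr}^W_0$, i.e.\ with the top cohomology of the dual complex. Hence $\mathrm{PD}[T]\in W_0H^{2n}_{\lim}=\bQ\lambda^n$.

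\textbf{Step 4: nonvanishing.} If $T$ is isotopic to a special Lagrangian $F$, then $[F]=[T]$. Moreover $\int_F\Omega_t=\pm\mathrm{Vol}(F)>0$ after a phase rotation, so $[F]\neq0$ in homology and thus $\mathrm{PD}[T]\neq0$.
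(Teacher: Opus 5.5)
Your four-step outline is the paper's: type III on $H^2$, Verbitsky's component in $H^{2n}$, Clemens' vanishing tori, and the calibration argument. Steps 1 and 4 essentially coincide with the paper. However, Step 2 is left open at exactly its key point, and Step 3 gives reasons that do not establish the claim.

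\textbf{Step 2, and the rank claim in Step 1.} The missing idea is $h^{2n,0}=1$. The map $N^{2n}\colon \mathrm{Gr}^W_{4n}H^{2n}_{\lim}\to \mathrm{Gr}^W_{0}H^{2n}_{\lim}$ is an isomorphism, and $\mathrm{Gr}^W_{4n}H^{2n}_{\lim}$ is pure of type $(2n,2n)$. Hence $\dim W_0H^{2n}_{\lim}\le \dim F^{2n}_{\lim}=h^{2n,0}=1$. Since $0\neq\lambda^n\in W_0$ (multiplicativity of $W$, and $\lambda^n\neq 0$ by the Fujiki relation), this gives $W_0H^{2n}_{\lim}=\bQ\lambda^n$. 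The paper phrases the same point as follows: the orthogonal complement of $\mathrm{Sym}^nH^2$ in $H^{2n}$ has Hodge level $<2n$, so its weights lie in $[2,4n-2]$.

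The same argument with $h^{2,0}=1$ is what makes $\operatorname{Im}N^2\subset H^2$ one-dimensional. You assert this without proof, but $q$-antisymmetry only gives isotropy, not rank one.

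Your LLV claim is also true and would give a representation-theoretic alternative, but it needs one more observation. Every LLV summand other than Verbitsky's avoids $H^0$ and $H^{4n}$, since these are one-dimensional and lie in Verbitsky's component. So its highest weight has leading coordinate $\le n-1$, and its $N$-weights lie in $[-(2n-2),\,2n-2]$. Your fallback of citing $\dim \mathrm{Gr}^W_0H^{2n}_{\lim}=1$ is acceptable, but the one-line Hodge argument above makes it unnecessary.

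\textbf{Step 3.} The monodromy-invariance argument is wrong. A profound torus is not disjoint from the support of the monodromy: it sits over a $0$-dimensional stratum, inside the singular locus of $X_0$ where Clemens' twist acts. It is invariant only because the twist restricts to a translation of the torus.

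More to the point, invariance only gives $\mathrm{PD}[T]\in\ker N$, which is much larger than $W_0$. The local invariant cycle theorem (surjectivity of $H^*(X_0)\to \ker N$) says nothing about cycles near $0$-strata landing in $\mathrm{Gr}^W_0$.

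What is actually needed, and what the paper cites, is Clemens' retraction $X_t\to X_0$, whose fibre over a $0$-dimensional stratum is the profound torus. Combine it with Harder's theorem that such fibres have classes in $W_{-4n}H_{2n}(X_t,\bQ)$. Poincar\'e duality then places $\mathrm{PD}[T]$ in $W_0H^{2n}_{\lim}$, after comparing the dlt and semistable models as in Remark~\ref{rmk:profound}.
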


\begin{proof}
After a finite base change, the monodromy operator $T_k$ on $H^k(X_t, \bQ)$ is unipotent, and $T_2$ preserves $q$, i.e., $q(T_2 \alpha, T_2 \alpha')=q(\alpha, \alpha')$ with $\alpha, \alpha' \in H^2(X_t, \bQ)$. The logarithmic monodromy operator $N_k = \log T_{k}$ on  $H^k(X_t, \bQ)$ is nilpotent of nilpotency index  $k+1$ for even $k \leq 2n$, i.e., $N^{k}_{k} \neq 0$ but $N^{k+1}_{k} = 0$,  and $q(N_2 \alpha, \alpha')=-q(\alpha, N_2\alpha')$; see \cite[Prop.~7.14]{KLSV2018} and \cite[Lem.~2.1]{HM23}. Hence, $W_0H^2_{\mathrm{lim}}=\mathrm{Im}(N^2_2)$ is isotropic since 
$q(N^2_2 \alpha, N^2_2 \alpha')=-q(N_2\alpha, N^3_2\alpha')=0$. 
Moreover, 
$$W_0H^{2k}_{\rm lim}\simeq W_0 {\rm Sym}^k H^2_{\rm lim}\simeq {\rm Sym}^kW_0H^2_{\rm lim} \simeq \bQ \lambda^k$$ for all $k\leq n$.
Indeed, we have ${\rm Sym}^k H^2_{\rm lim}\subset H^{2k}_{\rm lim}$ for $k\leq n$ by \cite[Thm.~1.5]{Ver96}. The Hodge filtration on
its orthogonal complement has level $<2k$, and  therefore the weights of this complement
are supported between $2$ and $4k-2$; see also \cite[Proof of Prop.~7.14]{KLSV2018}.

By Clemens description of maximal vanishing cycles of semistable fillings in
\cite{Clemens1969} and \cite[Thms.~3.5,~3.8]{Harder19}, the fundamental class of a profound torus $\bT^{2n}$ lies in $W_{-4n}H_{2n}(X_t, \bQ)$, so its Poincar\'{e} dual lies in $W_0H^{2n}_{\mathrm{lim}}$; cf.~\Cref{rmk:profound} for the comparison of semistable and dlt fillings.   
Finally, a special Lagrangian torus $F$ has
$|\int_{F}\Omega_t|>0$, so its class is nonzero.
\end{proof}

\section{Hyperk\"{a}hler SYZ conjecture}
\begin{proof}[Proof of \Cref{gen-abund}]
Let $L$ be a nef line bundle on a compact hyperk\"{a}hler manifold $M$.
If $q(L)>0$, then $M$ is projective
and $L$ is big, and the semiampleness follows from the base point free theorem. So assume that $L$ is non-trivial with $q(L)=0$.

 A {\it marking} of $M$ is an isometry from 
$H^2(M,\bZ)$ to a fixed reference lattice. 
Let $\mathcal{M}$ 
denote a connected component 
of the moduli space of marked 
hyperk\"ahler manifolds containing $M$
with a given marking.
Let $\mathcal{M}_\lambda\subset \mathcal{M}$ 
be the sub-moduli space
on which a reference isotropic class $\lambda$ is the first 
Chern class $\lambda = c_1(L)$ of a nef line bundle $L$. It is connected as in \cite[Prop.~2.7]{SV2025}. So 
by \cite[Thm.~3.8]{SV2025}, it suffices to prove that $L$
is semiample on one member of $\mathcal{M}_\lambda$.

We first construct a projective maximal degeneration $\pi\colon (X, H) \to \bD^*$ 
of compact hyperk\"{a}hler
manifolds, admitting a classifying
map $\mathbb{H}\to \mathcal{M}$ from the universal cover $\mathbb{H}$ of $\bD^*$, and such that
$W_0 H^2_{\mathrm{lim}}=\bQ\lambda$, with respect
to the marking arising from
this map. We follow \cite[\S 4]{Sold2019}. Choose an
integral vector $\mu\in \lambda^\perp$
with $q(\mu)>0$.
Define a nilpotent operator $N\in \frak{so}(H^2(M,\bZ))$
by $N x \coloneqq  q(x, \mu)\lambda - q(x, \lambda)\mu.$ Fix some $v\in \{\lambda,\mu\}^\perp$ integral,
 with $q(v)>0$, and choose a line $\bC z \in \mathbb{P}(v^{\perp_q}\otimes \bC)$, 
 with $q(z, \lambda)=1$, $q(z, z)=0$.
Then $N^3=0$, $N^2\neq 0$. The period
mapping $\sigma_t \coloneqq {\rm exp}(\frac{\log t}{2\pi i} N)\cdot \bC z$ is well-defined on
$\mathbb{H}$ for small $t$ and
induces a map to the period quotient
for $v$-quasipolarized hyperk\"ahler
manifolds. This map descends to
$\bD^*$ for $\mu$ sufficiently divisible, i.e., as long as ${\rm exp}(N)$ is a monodromy operator
of the unmarked moduli space; cf.~\cite[Thm.~2.6]{Verb20} and \cite[Thm.~8.2]{BL2022}.
Hence, by global Torelli \cite{Verb13, huyglob}, we get
a degeneration $X \to \bD^*$ of hyperk\"ahler manifolds.
Furthermore, a
choice of marking of $X_t$ 
for which the multivalued
period map on $\bD^*$
is $t\mapsto \bC[\sigma_t]$ defines
a classifying map $\mathbb{H}\to \cM$ identifying
$W_0H^2_{\rm lim}$ with
the chosen isotropic ray $\bQ \lambda$.

% admits a lift to $\mathcal{M}$ by 
% for $v$-quasipolarized hyperk\"ahler manifolds.
% Requiring sufficient divisibility of $\mu$
% ensures that, upon forgetting the marking,

% The precise condition is that
% ${\rm exp}(N)$ is a monodromy operator
% of the unmarked moduli space; cf.~\cite[Thm.~2.6]{Verb20} and \cite[Thm.~8.2]{BL2022}.

The logarithmic monodromy is $N$, with 
$W_0 H^2_{\mathrm{lim}}=\mathrm{Im}(N^2)=\bQ\lambda$, with respect to
the map $\mathbb{H}\to \mathcal{M}$. 
By algebraic approximation, we may ensure
that $X\to \bD^*$ is an analytic restriction
of an algebraic 
family with the same 
logarithmic monodromy $N$
and very general Picard rank $1$.
Possibly after shrinking $\bD^*$ and taking a finite cover, the conditions $q(v)>0$, $q(v, [\sigma_t])=0$,
and the Picard rank $1$ condition ensure that
$v$ defines a relatively ample line bundle.
So $X\to \bD^*$ is projective.
After a further finite base change, 
it admits a semistable filling, and 
the hypotheses of \S\ref{metric-syz} are satisfied. The degeneration is maximal by \cite[Prop.~7.14.(ii)]{KLSV2018}.

By \Cref{thm:metricSYZ}
and \Cref{lem:weight}, 
for sufficiently small $t$, $(X_t, J, \omega_J \coloneqq
\omega_{{\rm CY},t}, \Omega_t)$ contains a special Lagrangian torus
$F$ with $\mathrm{PD}[F]=c\lambda^n$, $c \neq 0$,  isotopic to
a profound torus. 
The Ricci-flat metric on $X_t$ is hyperk\"{a}hler;
choose a compatible triple $(I,J,K)$. Its volume form
$\Omega_J=\sigma_J^n/n!$ is a nonzero scalar multiple of $\Omega_t$,
so $F$ remains special Lagrangian for $\Omega_J$. 

By \Cref{prop:PDholLag}, the manifold $X_{t}$ admits a complex structure $I'$ such that $(X_t,I')\in \mathcal{M}$, $F$ is an $I'$-holomorphic Lagrangian torus, and $\lambda \in H^{1,1}(X_t, I')$. 
So there is a
line bundle with $c_1(L)=\lambda$.
Up to a small deformation of 
$(X_t,I',L,F)$, we can
suppose that $(X_t, I')$ is non-projective with
$\mathrm{Pic}(X_t, I')_{\bQ} = \bQ L$; see the argument of \cite[Prop.~A.6]{EFGMS25}. 
Hence $L$ is nef and 
$(X_t,I')\in \mathcal{M}_\lambda$.
By \cite[Thm 4.1]{GLR2013}, there exists a Lagrangian fibration $f \colon (X_t, I') \to B$ with fiber $F$ and normal projective base $B$. The pullback of an ample line bundle of $B$ must be a multiple of $L$. Thus $L$ is semiample on $(X_t, I')$. As explained above, by \cite[Thm.~3.8]{SV2025},
$L$ remains semi-ample in $\mathcal{M}_\lambda$. This proves the SYZ conjecture.
\end{proof}

\bibliography{biblio}

\begin{thebibliography}{10}

\bibitem{BL2022}
B.~Bakker and C.~Lehn, \emph{The global moduli theory of symplectic varieties}, J. Reine Angew. Math. \textbf{790} (2022) 223--265.

\bibitem{blumli2026}
H.~Blum and Y.~Liu, \emph{Valuative independence for Calabi--Yau varieties}, arXiv:2604.27890 (2026).

\bibitem{Clemens1969}
C.~H. Clemens, Jr., \emph{Picard-{L}efschetz theorem for families of nonsingular algebraic varieties acquiring ordinary singularities}, Trans. Amer. Math. Soc. \textbf{136} (1969) 93--108.

\bibitem{dFKX17}
T.~de~Fernex, J.~Koll\'{a}r, and C.~Xu, \emph{The dual complex of singularities}, in Higher dimensional algebraic geometry---in honour of {P}rofessor {Y}ujiro {K}awamata's sixtieth birthday, Vol.~74 of \emph{Adv. Stud. Pure Math.}, 103--129, Math. Soc. Japan, Tokyo (2017), ISBN 978-4-86497-046-4.

\bibitem{EFGMS25}
P.~Engel, S.~Filipazzi, F.~Greer, M.~Mauri, and R.~Svaldi, \emph{Boundedness of some fibered K-trivial varieties}, arXiv:2507.00973 (2025).

\bibitem{Fujino2011}
O.~Fujino, \emph{Semi-stable minimal model program for varieties with trivial canonical divisor}, Proc. Japan Acad. Ser. A Math. Sci. \textbf{87} (2011), no.~3,  25--30.

\bibitem{GLR2013}
D.~Greb, C.~Lehn, and S.~Rollenske, \emph{Lagrangian fibrations on hyperk\"{a}hler manifolds---on a question of {B}eauville}, Ann. Sci. \'{E}c. Norm. Sup\'{e}r. (4) \textbf{46} (2013), no.~3,  375--403 (2013).

\bibitem{GHJ2003}
M.~Gross, D.~Huybrechts, and D.~Joyce, Calabi-{Y}au manifolds and related geometries, Universitext, Springer-Verlag, Berlin (2003), ISBN 3-540-44059-3. Lectures from the Summer School held in Nordfjordeid, June 2001.

\bibitem{Harder19}
A.~Harder, \emph{Torus fibers and the weight filtration}, arXiv:1908.05110 (2019).

\bibitem{HB1982}
R.~Harvey and H.~B. Lawson, Jr., \emph{Calibrated geometries}, Acta Math. \textbf{148} (1982) 47--157.

\bibitem{H2005}
D.~Huybrechts, Complex geometry, Universitext, Springer-Verlag, Berlin (2005), ISBN 3-540-21290-6. An introduction.

\bibitem{huyglob}
---{}---{}---, \emph{A global {Torelli} theorem for hyperk\"ahler manifolds [after {M.} {Verbitsky]}}, in S\'eminaire Bourbaki Volume 2010/2011 Exhibits 1027-1042. With table by names of authors from 1948/49 to 2009/10., number 348 in Asterisk, 375--403, French Mathematical Society (2012). Talk:1040.

\bibitem{HM22}
D.~Huybrechts and M.~Mauri, \emph{Lagrangian fibrations}, Milan J. Math. \textbf{90} (2022), no.~2,  459--483.

\bibitem{HM23}
---{}---{}---, \emph{On type {II} degenerations of hyperk\"{a}hler manifolds}, Math. Res. Lett. \textbf{30} (2023), no.~1,  125--141.

\bibitem{KLSV2018}
J.~Koll\'{a}r, R.~Laza, G.~Sacc\`a, and C.~Voisin, \emph{Remarks on degenerations of hyper-{K}\"{a}hler manifolds}, Ann. Inst. Fourier (Grenoble) \textbf{68} (2018), no.~7,  2837--2882.

\bibitem{li2026}
Y.~Li, \emph{Valuative independence and metric SYZ conjecture}, arXiv:2605.00516 (2026).

\bibitem{Sold2019}
A.~Soldatenkov, \emph{Limit mixed {H}odge structures of hyperk\"{a}hler manifolds}, Mosc. Math. J. \textbf{20} (2020), no.~2,  423--436.

\bibitem{SV2025}
A.~Soldatenkov and M.~Verbitsky, \emph{The abundance and {SYZ} conjectures in families of hyperk\"{a}hler manifolds}, \'{E}pijournal G\'{e}om. Alg\'{e}brique \textbf{9} (2025) Art. 26, 17.

\bibitem{Ver96}
M.~Verbitsky, \emph{Cohomology of compact hyper-{K}\"{a}hler manifolds and its applications}, Geom. Funct. Anal. \textbf{6} (1996), no.~4,  601--611.

\bibitem{Verb13}
---{}---{}---, \emph{Mapping class group and a global {T}orelli theorem for hyperk\"{a}hler manifolds}, Duke Math. J. \textbf{162} (2013), no.~15,  2929--2986. Appendix A by Eyal Markman.

\bibitem{Verb20}
---{}---{}---, \emph{Errata for ``{M}apping class group and a global {T}orelli theorem for hyperk\"{a}hler manifolds'' by {M}isha {V}erbitsky}, Duke Math. J. \textbf{169} (2020), no.~5,  1037--1038.

\bibitem{zhu2023}
Y.~Zhu, \emph{The general definition of Kahler angle}, arXiv:2303.17122 (2023).

\end{thebibliography}
\bibliographystyle{mrl}

\end{document}